\newtheorem{theorem}{Theorem}[section]
\newtheorem{proposition}[theorem]{Proposition}
\theoremstyle{definition}
\numberwithin{equation}{section}
\begin{document}

\title[Torsors over moduli spaces of vector bundles]{Torsors over
moduli spaces of vector bundles over curves of fixed determinant}

\author[I. Biswas]{Indranil Biswas}

\address{Department of Mathematics, Shiv Nadar University, NH91, Tehsil
Dadri, Greater Noida, Uttar Pradesh 201314, India}

\email{indranil.biswas@snu.edu.in, indranil29@gmail.com}

\author[J. Hurtubise]{Jacques Hurtubise}

\address{Department of Mathematics, McGill University, Burnside
Hall, 805 Sherbrooke St. W., Montreal, Que. H3A 2K6, Canada}

\email{jacques.hurtubise@mcgill.ca}

\subjclass[2010]{14H60, 14D21}

\keywords{Stable bundle, connection, Quillen metric, theta bundle}

\date{}

\begin{abstract}
Let ${\mathcal M}$ be a moduli space of stable vector bundles of rank $r$ and
determinant $\xi$ on a compact Riemann surface $X$. Fix a semistable holomorphic vector bundle $F$ on
$X$ such that $\chi(E\otimes F)\,=\, 0$ for $E\, \in\, \mathcal M$. Then any $E\,\in\, \mathcal M$ with
$H^0(X,\, E\otimes F)\,=\, 0\,=\, H^1(X,\, E\otimes F)$ has a natural holomorphic projective connection.
The moduli space of pairs $(E,\, \nabla)$, where $E\, \in\, \mathcal M$ and $\nabla$ is a holomorphic
projective connection on $E$, is an algebraic $T^*{\mathcal M}$--torsor on $\mathcal M$. We identify
this $T^*{\mathcal M}$--torsor on $\mathcal M$ with the $T^*{\mathcal M}$--torsor given by the sheaf
of connections on an ample line bundle over $\mathcal M$.
\end{abstract}

\maketitle

\section{Introduction}\label{sec0}

Let $X$ be a compact connected Riemann surface of genus $g$. The holomorphic cotangent bundle of
$X$ is denoted by $K_X$. A holomorphic projective connection
on a holomorphic vector bundle $E$ over $X$ is a holomorphic connection on the projective bundle
${\mathbb P}(E)\, \longrightarrow\, X$. It is same as a holomorphic connection on the corresponding
principal $\text{PGL}(r,{\mathbb C})$--bundle on $X$, where $r$ is the rank of $E$.

We prove the following (see Proposition \ref{prop1}):

\begin{proposition}\label{pr1}
For any holomorphic vector bundle $V$ on $X$ such that
$$
H^0(X,\, V)\ =\ 0\ =\ H^1(X,\, V),
$$
the projective bundle ${\mathbb P}(V)$ has a canonical holomorphic connection.
\end{proposition}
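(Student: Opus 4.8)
The plan is to build the connection by hand, using the hypothesis $H^0(X,V)=0=H^1(X,V)$ to realize the fibres of $V$ as spaces of meromorphic sections with a prescribed simple pole, and then to move the pole. First I would fix $x\in X$ and consider the short exact sequence $0\to V\to V(x)\to V(x)|_x\to 0$. Since the fibre $V(x)|_x$ is canonically $V_x\otimes T_xX$, and since $H^0(X,V)=0$ and $H^1(X,V)=0$, the long exact sequence shows that the principal part map $H^0(X,V(x))\longrightarrow V_x\otimes T_xX$ is an isomorphism (injective because $H^0(X,V)=0$, surjective because its cokernel embeds in $H^1(X,V)=0$). Thus each fibre $V_x$ is canonically identified, up to the one–dimensional twist $T_xX$, with the space $H^0(X,V(x))$ of global sections of $V$ having at most a simple pole at $x$; in particular $\mathbb P(V_x)=\mathbb P(H^0(X,V(x)))$ canonically.

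Next I would compare neighbouring fibres by evaluating these meromorphic sections away from their pole. For $x'$ close to $x$ the evaluation map $\mathrm{ev}_{x'}\colon H^0(X,V(x))\to V_{x'}$, $s\mapsto s(x')$, has kernel $H^0(X,V(x-x'))$, which vanishes for $x'$ in a neighbourhood of $x$: the line bundle $\mathcal O_X(x-x')$ degenerates to $\mathcal O_X$ as $x'\to x$, so by upper semicontinuity $h^0(V(x-x'))=h^0(V)=0$ near the diagonal. Hence $\mathrm{ev}_{x'}$ is an isomorphism, and composing with the identification of the previous step gives a holomorphic family of projective isomorphisms $P_{x,x'}\colon \mathbb P(V_x)\xrightarrow{\ \sim\ }\mathbb P(V_{x'})$ defined for $x'$ near $x$. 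Restricting this family to the first–order neighbourhood of the diagonal in $X\times X$ is precisely the datum of a holomorphic projective connection on $\mathbb P(V)$, and since no choices enter, the resulting connection is canonical.

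The step that needs care — and which I expect to be the main point — is to check that $P_{x,x'}$ extends holomorphically across the diagonal with value the identity, so that it genuinely defines a connection rather than merely an identification off the diagonal. In a local coordinate $z$ with $z(x)=0$ and a local trivialisation of $V$, a section $s\in H^0(X,V(x))$ with principal part $e$ reads $s(z)=e/z+h(z)$ with $h$ holomorphic, so at the point $z=t$ one has $s(x')=\tfrac1t\,(e+t\,h(0)+O(t^2))$. The scalar $1/t$ disappears upon projectivisation, leaving $P_{x,x'}([e])=[\,e+t\,h(0)+O(t^2)\,]$, which is holomorphic in $t$ at $t=0$ with value $[e]$; this confirms both the holomorphic extension and that the connection restricts to the identity on the diagonal, while the coefficient of $t$ records the connection form in $\mathrm{ad}(\mathbb P(V))\otimes K_X$. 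This same computation explains why only a projective connection is produced: the diverging scalar $1/t$ — equivalently the twist by $T_xX$ in the principal part isomorphism — admits no canonical removal, so the linear maps $\mathrm{ev}_{x'}$ do not extend across the diagonal although their projectivisations do. Finally I would note that the construction invokes the hypotheses only through the two isomorphisms above and is manifestly independent of the coordinate and trivialisation used in the local check, yielding the asserted canonical holomorphic connection on $\mathbb P(V)$.
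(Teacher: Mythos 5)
Your construction is correct, and at bottom it is the paper's own proof read fiberwise rather than globally. The paper forms the exact sequence
$0 \to (p_1^*V)\otimes p_2^*(V^*\otimes K_X) \to (p_1^*V)\otimes p_2^*(V^*\otimes K_X)\otimes \mathcal{O}_{X\times X}(\Delta) \to \mathrm{End}(V) \to 0$
on $X\times X$, kills the outer cohomology by K\"unneth and Serre duality, and obtains a single global section $\widehat{\phi}$ mapping to $\mathrm{Id}_V$; restricted to a slice $X\times\{x\}$, that section is precisely the inverse of your principal-part isomorphism $H^0(X,V(x))\stackrel{\sim}{\longrightarrow} V_x\otimes T_xX$, and its restriction to the first formal neighbourhood $2\Delta$ is precisely the first-order jet of your evaluation maps $\mathrm{ev}_{x'}$. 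The genuine difference is where holomorphy in the pole variable comes from. In the paper it is automatic, since $\widehat{\phi}$ is by construction a holomorphic section over $X\times X$. In your version the identifications $\mathbb{P}(V_x)\cong\mathbb{P}(H^0(X,V(x)))$ are produced one $x$ at a time, and the assertion ``holomorphic family of projective isomorphisms $P_{x,x'}$'' needs justification: you must know that the spaces $H^0(X,V(x))$ assemble into a holomorphic rank-$r$ vector bundle over $X$ — this follows from Grauert's theorem (cohomology and base change) applied to $p_1^*V\otimes\mathcal{O}_{X\times X}(\Delta)$ pushed down to the second factor, using $h^0(V(x))=r$ constant and $h^1(V(x))=0$ — and that the principal-part and evaluation maps are holomorphic bundle maps; your local Laurent computation only establishes holomorphy in $x'$ for fixed $x$. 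Supplying this step essentially reconstructs the paper's global object on $X\times X$, so the two arguments converge. On the other side of the ledger, your treatment of the projective ambiguity is cleaner: since your $P_{x,x'}$ are defined invariantly as projective maps (the twist by $T_xX$ and the divergent scalar $1/t$ cancel under projectivization), no choice of trivialization of $(p_2^*K_X)\otimes\mathcal{O}_{X\times X}(\Delta)$ along $2\Delta$ is needed, whereas the paper must verify that two such choices alter the local linear connection only by a scalar-valued one-form tensored with the identity.
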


Assume that $g\, \geq\, 2$. Let $\mathcal M$ denote the moduli space of stable vector bundles
$E$ on $X$ of rank $r$ with $\bigwedge^r E\,=\, \xi$, where $\xi$ is a fixed holomorphic line bundle
on $X$. Fix a semistable holomorphic vector bundle $F$ on
$X$ such that $\chi(E\otimes F)\,=\, 0$ for $E\, \in\, \mathcal M$. Let ${\mathcal U}\, \subset\,
\mathcal M$ be the Zariski open subset parametrizing all $E\, \in\, \mathcal M$ such that
$H^0(X,\, E\otimes F)\,=\, 0\,=\, H^1(X,\, E\otimes F)$.

We prove the following (see Proposition \ref{prop2}):

\begin{proposition}\label{pr2}
For any vector bundle $E\, \longrightarrow \, X$ lying in the above open subset $\mathcal U$,
the projective bundle ${\mathbb P}(E)$ has a canonical holomorphic connection.
\end{proposition}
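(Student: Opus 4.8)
The plan is to deduce the statement from Proposition \ref{pr1} by transporting the connection along the fixed tensor factor $F$. First I would set $V\,:=\,E\otimes F$. By the definition of the open set $\mathcal U$, for $E\,\in\,\mathcal U$ we have $H^0(X,\,V)\,=\,0\,=\,H^1(X,\,V)$, so Proposition \ref{pr1} applies and produces a canonical holomorphic connection on $\mathbb P(E\otimes F)$. I will record this connection as a holomorphic splitting $\lambda\colon T_X\longrightarrow \mathrm{At}(\mathbb P(E\otimes F))$ of the Atiyah sequence
\[
0\longrightarrow \mathrm{ad}(\mathbb P(E\otimes F))\longrightarrow \mathrm{At}(\mathbb P(E\otimes F))\longrightarrow T_X\longrightarrow 0,
\]
where, for a vector bundle $W$, one has $\mathrm{ad}(\mathbb P(W))\,=\,\mathrm{End}(W)/\mathcal O_X$ and $\mathrm{At}(\mathbb P(W))\,=\,\mathrm{At}(W)/\mathcal O_X$, the copy of $\mathcal O_X$ being the scalar endomorphisms.

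The core of the argument is to produce a morphism of Atiyah algebroids $\bar P\colon \mathrm{At}(\mathbb P(E\otimes F))\longrightarrow \mathrm{At}(\mathbb P(E))$ lying over $\mathrm{id}_{T_X}$, and then to take the desired connection to be $\bar P\circ\lambda$. Since the anchor of $\mathrm{At}(\mathbb P(E))$ composed with $\bar P$ equals the anchor of $\mathrm{At}(\mathbb P(E\otimes F))$, the composite $\bar P\circ\lambda$ is automatically a holomorphic splitting, i.e. a holomorphic projective connection on $E$. To build $\bar P$ I would use the partial trace over $F$. Writing $s\,=\,\operatorname{rank}F$ and using $\mathrm{End}(E\otimes F)\,=\,\mathrm{End}(E)\otimes\mathrm{End}(F)$, the map $\tfrac1s(\mathrm{id}_{\mathrm{End}(E)}\otimes\operatorname{tr}_F)$ sends $\mathrm{id}_{E\otimes F}$ to $\mathrm{id}_E$ and is a left inverse to $\phi\mapsto \phi\otimes\mathrm{id}_F$; hence it descends to a homomorphism $\mathrm{ad}(\mathbb P(E\otimes F))\to \mathrm{ad}(\mathbb P(E))$ on adjoint bundles. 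I would then extend this to first order operators: in a local holomorphic frame a section of $\mathrm{At}(E\otimes F)$ with symbol $\theta\in T_X$ has the form $\theta\cdot\mathrm{id}+\widetilde A$ with $\widetilde A$ an $\mathrm{End}(E\otimes F)$--valued coefficient, and I send it to $\theta\cdot\mathrm{id}_E+\tfrac1s\operatorname{tr}_F\widetilde A$, a section of $\mathrm{At}(E)$ with the same symbol.

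The step I expect to be the main obstacle is the global well-definedness of this partial trace on Atiyah bundles. A direct transition--function computation shows that under a change of frame with transition $g_E\otimes g_F$ the partial--traced operator transforms as a connection on $E$ up to the extra term $-\tfrac1s\,d\log\det g_F\cdot\mathrm{id}_E$. This anomaly is genuinely nonzero on the linear Atiyah bundle $\mathrm{At}(E)$ (reflecting the fact that tensoring with $F$ does not respect linear connections in the absence of a connection on $F$), but it is valued in the scalar subsheaf $\mathcal O_X\cdot\mathrm{id}_E$. Consequently it vanishes after passing to the quotient $\mathrm{At}(E)/\mathcal O_X\,=\,\mathrm{At}(\mathbb P(E))$, and since the partial trace also carries the scalars $\mathcal O_X\subset\mathrm{At}(E\otimes F)$ into $\mathcal O_X\subset\mathrm{At}(E)$, it descends to the claimed holomorphic morphism $\bar P$ over $\mathrm{id}_{T_X}$. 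Once $\bar P$ is in hand, $\bar P\circ\lambda$ is the sought holomorphic projective connection on $E$, and its canonicity is inherited from that of $\lambda$ furnished by Proposition \ref{pr1} together with the evident naturality of the partial trace in $E$. Note that this argument is purely formal and local in nature, using neither the stability of $E$ nor the semistability of $F$ beyond what guarantees the vanishing $H^0(X,\,E\otimes F)\,=\,0\,=\,H^1(X,\,E\otimes F)$ on $\mathcal U$.
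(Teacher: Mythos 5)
Your proof is correct, and its skeleton is the same as the paper's: set $V\,=\,E\otimes F$, invoke Proposition \ref{prop1}, and push the resulting projective connection on ${\mathbb P}(E\otimes F)$ down to ${\mathbb P}(E)$ by the normalized partial trace over the $F$--factor, the essential point in both cases being that the failure of this operation to be well defined is central, hence invisible projectively. The implementations, however, are genuinely different. The paper never touches local frames: it passes to principal bundles, uses the reduction of structure group $\iota\,:\,{\mathcal E}\times_X{\mathcal F}\,\hookrightarrow\,{\mathcal G}$ to $\text{PGL}(r,{\mathbb C})\times\text{PGL}(r',{\mathbb C})$, constructs a splitting $J\,:\,sl(rr',{\mathbb C})\,\longrightarrow\,sl(r,{\mathbb C})$ of $\text{PGL}(r,{\mathbb C})\times\text{PGL}(r',{\mathbb C})$--modules --- which is exactly your $\tfrac1s(\mathrm{id}\otimes\mathrm{tr}_F)$ written Lie--theoretically --- and then descends the form $J\circ\iota^*\vartheta$ along ${\mathcal E}\times_X{\mathcal F}\,\longrightarrow\,{\mathcal E}$ using equivariance of $J$ and the triviality of the $\text{PGL}(r',{\mathbb C})$--action on $sl(r,{\mathbb C})$. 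Because everything there is $sl$--valued from the start, the scalar anomaly never appears; in your version it does appear, as the term $\tfrac1s\,d\log\det g_F\cdot\mathrm{id}_E$, and is killed by hand in the quotient $\mathrm{At}(E)/{\mathcal O}_X$. Your route buys elementarity (only Atiyah sequences and transition functions, no principal--bundle descent) and it makes transparent precisely why one obtains only a projective, not a linear, connection on $E$; the paper's route avoids all local choices, so equivariance and gluing are automatic rather than verified by computation. One small correction: your $\bar P$ has no reason to preserve Lie brackets, so calling it a morphism of Atiyah algebroids is an overstatement; but the argument only needs $\bar P$ to be an ${\mathcal O}_X$--linear bundle map commuting with the symbol maps, which your transition--function computation does establish.
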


The connection in Proposition \ref{pr2} is constructed using the connection in Proposition \ref{pr1}.

Using $F$ we can construct a line bundle on $\mathcal M$. This line bundle, which will be denoted by $L$,
has a Hermitian connection $\nabla^{L,Q}$ given by a construction of Quillen. Let
${\rm Conn}(L)\, \longrightarrow\, \mathcal M$ denote the fiber bundle given by the space of locally defined
holomorphic splittings of the Atiyah exact sequence for $L$. So the space of holomorphic (respectively, $C^\infty$)
sections of ${\rm Conn}(L)$ over an open subset $U\, \subset\, \mathcal M$ is the space of holomorphic connections
(respectively, complex connections) on the restriction $L\big\vert_U$; see Section \ref{ses} for the construction
of ${\rm Conn}(L)$. Since the space of all holomorphic connections on $L\big\vert_U$ is an affine space for
$H^0(U,\, T^*U)$, the projection ${\rm Conn}(L)\, \longrightarrow\, \mathcal M$ makes ${\rm Conn}(L)$ an
algebraic torsor over $\mathcal M$ for the holomorphic cotangent bundle $T^*\mathcal M$.

Let ${\mathcal N}_C$ denote the moduli space of all pairs of the form $(E,\, \nabla)$, where $E\, \in\,
\mathcal M$ and $\nabla$ is a holomorphic connection on the projective bundle ${\mathbb P}(E)$. The
space of all holomorphic connections on ${\mathbb P}(E)$ is an affine space for
$H^0(X,\, \text{ad}(E)\otimes K_X)\,=\, T^*_E{\mathcal M}$, where $\text{ad}(E)\, \subset\, \text{End}(E)$
is the subbundle of co-rank one given by the sheaf of endomorphisms of trace zero. Therefore, the projection
$$
\Phi\, :\, {\mathcal N}_C \, \longrightarrow\, {\mathcal M},\ \ \ (E,\, {\mathcal D})\, \longmapsto\, E
$$
makes ${\mathcal N}_C$ an algebraic torsor over $\mathcal M$ for the holomorphic cotangent bundle $T^*\mathcal M$.

We produce an isomorphism between the two $T^*\mathcal M$--torsors ${\rm Conn}(L)$ and ${\mathcal N}_C$; see
Theorem \ref{thm1}. The paper closes with some comments on why this correspondence between seemingly unrelated objects should hold.

When $F$ is a theta characteristic, these results were proved earlier in \cite{BH}, \cite{BB}. Indeed the situation considered there, for bundles 
$E$ of degree zero, is to consider the open set $U\, \subset\, \mathcal M$ for which $H^0(X,\, E\otimes K_X^{1/2} )\ =\ 0\ =\ H^1(X,\, E\otimes 
K_X^{1/2} )$. The construction in this paper provided there a connection on $E$, and so a projective connection on $ E\otimes K_X^{1/2}$. Our 
construction here extends this to arbitrary degree, and shows that stability is not required. (See \cite{Der} for other interesting results.)

\section{A canonical projective connection}

Let $X$ be a compact connected Riemann surface. The holomorphic cotangent bundle of $X$ will be
denoted by $K_X$. Take a holomorphic vector bundle $V$ of rank $r$ over $X$. Let
\begin{equation}\label{a1}
p_0\ :\ {\mathbb P}(V)\ \longrightarrow\ X
\end{equation}
be the corresponding projective bundle that parametrizes the hyperplanes in the fibers of
$V$. The space of holomorphic isomorphisms from ${\mathbb C}{\mathbb P}^{r-1}$ to the fibers
of ${\mathbb P}(V)$ produce a holomorphic principal $\text{PGL}(r,{\mathbb C})$--bundle
\begin{equation}\label{a2}
\mathbf{P} \ =\ P_{\text{PGL}(r,{\mathbb C})} \ \longrightarrow \ X.
\end{equation}

A holomorphic connection on the fiber bundle ${\mathbb P}(V)$ in \eqref{a1} is a holomorphic
splitting of the differential
$$
dp_0 \ :\ T{\mathbb P}(V) \ \longrightarrow\ p^*_0 TX
$$
of the projection $p_0$ in \eqref{a1}. There is a natural bijection between the holomorphic connections
on ${\mathbb P}(V)$ and the holomorphic connections on the principal $\text{PGL}(r,{\mathbb C})$--bundle
$\mathbf P$ in \eqref{a2}. A holomorphic connection on ${\mathbb P}(V)$ is also called a holomorphic
projective connection on $V$.

\begin{proposition}\label{prop1}
For any holomorphic vector bundle $V$ on $X$ such that
\begin{equation}\label{a2-a}
H^0(X,\, V)\ =\ 0\ =\ H^1(X,\, V),
\end{equation}
the projective bundle ${\mathbb P}(V)$ has a canonical holomorphic connection.
\end{proposition}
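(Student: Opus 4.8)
The plan is to build the connection from a canonical reproducing (Szeg\H{o}) kernel attached to $V$, and then to read the projective connection off the leading behaviour of this kernel along the diagonal. I would work on $X\times X$, with projections $p_1,p_2\,:\,X\times X\,\longrightarrow\,X$, diagonal $\Delta$, and consider the sheaf
\[
\mathcal{F}\ =\ p_1^*V\otimes p_2^*(V^*\otimes K_X).
\]
The twist by $K_X$ on the second factor is engineered precisely so that the residue of a section with a simple pole along $\Delta$ is an honest endomorphism: using $\mathcal{O}(\Delta)\vert_\Delta\,\cong\,N_\Delta\,\cong\,TX$ together with $K_X\otimes TX\,\cong\,\mathcal{O}_X$, the restriction $\mathcal{F}(\Delta)\vert_\Delta$ is canonically identified with $\text{End}(V)$.

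Next I would compute the relevant cohomology by the K\"unneth formula,
\[
H^0(X\times X,\,\mathcal{F})\ =\ H^0(X,\,V)\otimes H^0(X,\,V^*\otimes K_X),
\]
\[
H^1(X\times X,\,\mathcal{F})\ =\ \bigl(H^1(X,\,V)\otimes H^0(X,\,V^*\otimes K_X)\bigr)\,\oplus\,\bigl(H^0(X,\,V)\otimes H^1(X,\,V^*\otimes K_X)\bigr),
\]
so that the hypothesis \eqref{a2-a} forces $H^0(X\times X,\,\mathcal{F})\,=\,0\,=\,H^1(X\times X,\,\mathcal{F})$. Feeding this vanishing into the long exact sequence of
\[
0\ \longrightarrow\ \mathcal{F}\ \longrightarrow\ \mathcal{F}(\Delta)\ \longrightarrow\ \mathcal{F}(\Delta)\vert_\Delta\ \longrightarrow\ 0
\]
shows that the residue map $\text{Res}\,:\,H^0(X\times X,\,\mathcal{F}(\Delta))\,\longrightarrow\,H^0(X,\,\text{End}(V))$ is an isomorphism. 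In particular there is a \emph{unique} section $\mathbf{S}\,\in\,H^0(X\times X,\,\mathcal{F}(\Delta))$ with $\text{Res}_\Delta\,\mathbf{S}\,=\,\text{Id}_V$; this is the canonical kernel, and it is precisely its uniqueness that will make the resulting connection canonical.

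Finally I would extract the connection from the first-order jet of $\mathbf{S}$ along $\Delta$. In a local holomorphic coordinate $z$ and a local frame of $V$, the kernel has the form $\bigl(\tfrac{\text{Id}}{w-z}+B(z)+O(w-z)\bigr)\,dw$, and the matrix $B(z)$ is the candidate connection form. Under a change of frame $B$ transforms exactly as a connection one-form, with no anomaly, whereas under a coordinate change $z\mapsto\phi(z)$ it acquires the purely central correction $\tfrac12\,(\phi''/\phi')\,\text{Id}$. Since this anomaly is a scalar multiple of the identity, it dies upon passing to $\text{PGL}(r,\mathbb{C})$, so the local data $\{B\}$ patch to a globally defined holomorphic splitting of the Atiyah sequence of the principal $\text{PGL}(r,\mathbb{C})$--bundle $\mathbf{P}$, i.e.\ a holomorphic connection on ${\mathbb P}(V)$. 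The main obstacle is exactly this last step: one must verify in a coordinate-free way that the first-order jet of $\mathbf{S}$ along $\Delta$ genuinely defines a splitting of the jet (Atiyah) sequence, and that its frame/coordinate ambiguity is exactly the central term above — so that only the projective connection, and not a full connection on $V$ itself, is the canonical output.
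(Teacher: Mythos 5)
Your proposal is correct and follows essentially the same route as the paper: the kernel $\mathbf{S}$ is the paper's section $\widehat{\phi}$, obtained from the same short exact sequence on $X\times X$ and the same K\"unneth vanishing, and the connection is read off from the first-order jet along $\Delta$ with the ambiguity being central. The only difference is presentational: where you verify the frame/coordinate transformation laws (correctly, including the central $\tfrac12\,(\phi''/\phi')\,\mathrm{Id}$ anomaly) in local coordinates, the paper works coordinate-freely on the nonreduced divisor $2\Delta$, showing that any two trivializations of $\bigl((p_2^*K_X)\otimes\mathcal{O}_{X\times X}(\Delta)\bigr)\big\vert_{2\Delta}$ extending the canonical one on $\Delta$ differ by a section of $K_X$, so the induced connections on $V$ differ by a $1$-form times the identity.
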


\begin{proof}
For $i\,=\, 1,\, 2$, let $p_i\,:\, X\times X\, \longrightarrow\, X$ be natural the projection
to the $i$-th factor. Let
$$
\Delta\ := \ \{(x,\, x)\,\, \big\vert\,\, x\, \in\, X\}\ \subset \ X\times X
$$
be the reduced diagonal. We will identify $X$ with $\Delta$ via the map $x\, \longmapsto\,
(x,\, x)$. The Poincar\'e adjunction formula says that the restriction of the holomorphic line bundle
${\mathcal O}_{X\times X}(\Delta)$ to $\Delta$ is identified with the normal
bundle $\Delta$, so ${\mathcal O}_{X\times X}(\Delta)\big\vert_\Delta \,=\, TX$ (invoking the identification of
$X$ with $\Delta$). Therefore, we have
\begin{equation}\label{a2c}
\left((p^*_2 K_X)\otimes {\mathcal O}_{X\times X}(\Delta)\right)\big\vert_{\Delta}\,=\,
K_X\otimes TX \,=\, {\mathcal O}_X
\end{equation}
invoking the identification of $X$ with $\Delta$. From \eqref{a2c} it follows that
\begin{equation}\label{a2b}
\left((p^*_1 V)\otimes p^*_2 (V^*\otimes K_X)\otimes {\mathcal O}_{X\times X}(
\Delta)\right)\big\vert_{\Delta}\, = \, \left((p^*_1 V)\otimes (p^*_2 V^*)\right)\big\vert_{\Delta}
\, =\, \text{End}(V).
\end{equation}

{}From \eqref{a2b} we have the following short exact sequence of sheaves on $X\times X$:
\begin{equation}\label{a3}
0\, \longrightarrow\, (p^*_1 V)\otimes p^*_2 (V^*\otimes K_X)
\, \longrightarrow\, (p^*_1 V)\otimes p^*_2 (V^*\otimes K_X)\otimes {\mathcal O}_{X\times
X}(\Delta) \, \longrightarrow\, \text{End}(V)\, \longrightarrow\, 0,
\end{equation}
where $\text{End}(V)$ is supported on $\Delta\,=\, X$. Since $H^1(X,\, V)\,=\, 0$
(see \eqref{a2-a}), the Serre duality says that $H^0(X,\, V^*\otimes K_X)\,=\, H^1(X,\, V)^*\,=\, 0$.
similarly, $H^1(X,\, V^*\otimes K_X)\,=\, 0$ because $H^0(X,\, V)\,=\, 0$.
In view of \eqref{a2-a} and the K\"unneth
formula $$H^i(X\times X,\, (p^*_1 V)\otimes p^*_2 (V^*\otimes K_X))\,=\,\bigoplus_{0\leq j \leq i}
H^j(X,\, V)\otimes H^{i-j}(X,\, V^*\otimes K_X)$$ we conclude that
$$
H^k(X\times X,\, (p^*_1 V)\otimes p^*_2 (V^*\otimes K_X))\ =\ 0
$$
for every $0\,\leq\, k\, \leq\, 2$. Consequently, from the long exact sequence of cohomologies
associated to the short exact sequence in \eqref{a3} the following isomorphism is obtained
\begin{equation}\label{a4}
\eta \ :\ H^0(X\times X,\, (p^*_1 V)\otimes p^*_2 (V^*\otimes K_X)\otimes {\mathcal O}_{X\times
X}(\Delta)) \ \stackrel{\sim}{\longrightarrow}\ H^0(X,\, \text{End}(V)).
\end{equation}
Let
\begin{equation}\label{a5}
\widehat{\phi} \ :=\, \eta^{-1}({\rm Id}_V) \ \in\ H^0(X\times X,\, (p^*_1 V)\otimes p^*_2 (V^*\otimes K_X)\otimes
{\mathcal O}_{X\times X}(\Delta))
\end{equation}
be the section that corresponds to the identity automorphism of $E$ by the isomorphism in \eqref{a4}.

Consider the nonreduced divisor $2\Delta \, \subset\, X\times X$. Let
\begin{equation}\label{a6}
\phi\, :=\, \widehat{\phi}\big\vert_{2\Delta} \, \in\, H^0(2\Delta,\, ((p^*_1 V)\otimes p^*_2 (V^*\otimes K_X)\otimes
{\mathcal O}_{X\times X}(\Delta))\big\vert_{2\Delta})
\end{equation}
be the restriction to $2\Delta$ of the section $\widehat{\phi}$ in \eqref{a5}. We will show that $\phi$ defines
a holomorphic connection on ${\mathbb P}(V)$.

Take any nonempty open subset $U\, \subsetneq\, X$. Let $$\Delta_U\, =\, \Delta\cap (U\times U)\, \subset\, X\times X$$
be the diagonal on $U\times U$. Consider the nonreduced divisor
$$2\Delta_U\, =\, 2\Delta\cap (U\times U)\, \subset\, X\times X.$$
Choose a holomorphic trivialization
\begin{equation}\label{a7}
\rho\, :\, {\mathcal O}_{2\Delta_U}\, \stackrel{\sim}{\longrightarrow}\, ((p^*_2K_X)\otimes{\mathcal O}_{X\times X}
(\Delta))\big\vert_{2\Delta_U}
\end{equation}
satisfying the condition that the restriction of $\rho$ to the reduced divisor $\Delta_U\, \subset\, 2\Delta_U$
coincides with the canonical trivialization in \eqref{a2c}. Let
\begin{equation}\label{a7a}
\phi'_U \, \in\, H^0(2\Delta_U,\, ((p^*_1 V)\otimes (p^*_2 V^*))\big\vert_{2\Delta_U})
\end{equation}
be the unique section such that
\begin{equation}\label{a8}
\phi'_U\otimes \rho(1_{2\Delta_U})\ = \phi\big\vert_{2\Delta_U},
\end{equation}
where $\phi$ is the section in \eqref{a6} and $1_{2\Delta_U}$ is the constant function $1$ on $2\Delta_U$
while $\rho$ is the homomorphism in \eqref{a7}. Since $\eta(\widehat{\phi})\,=\, {\rm Id}_V$ (see \eqref{a5}),
and the restriction of $\rho$ to $\Delta_U\, \subset\, 2\Delta_U$ coincides with the
canonical trivialization in \eqref{a2c}, it follows immediately from \eqref{a8} that the restriction
of $\phi'_U$ to $\Delta_U\, \subset\, 2\Delta_U$ coincides with ${\rm Id}_V\big\vert_U$ (invoking the natural
identification of $U$ with $\Delta_U$). Consequently, $\phi'_U$ defines a holomorphic connection on
the vector bundle $V\big\vert_U$ over $U$ \cite[p.~6, 2.2.4]{De}.

Note that $\left((p^*_2 K_X)\otimes {\mathcal O}_{X\times X}(\Delta)
\otimes {\mathcal O}_{X\times X}(-\Delta)\right)\big\vert_{\Delta}\,=\, K_X$ (using the identification of
$\Delta$ with $X$). So we have the following short exact sequence of sheaves on $2\Delta$:
$$
0\, \longrightarrow\, K_X\, \longrightarrow\, 
\left((p^*_2 K_X)\otimes {\mathcal O}_{X\times X}(\Delta)\right)\big\vert_{2\Delta}
\, \longrightarrow\, \left((p^*_2 K_X)\otimes {\mathcal O}_{X\times X}(\Delta)\right)\big\vert_{\Delta}
\, =\, {\mathcal O}_X \, \longrightarrow\, 0,
$$
where both $K_X$ and ${\mathcal O}_X$ are supported on $\Delta$. From this exact sequence it follows that
if we replace $\rho$ in \eqref{a7} with another such trivialization $\widetilde{\rho}$
of $\left((p^*_2 K_X)\otimes {\mathcal O}_{X\times X}(\Delta)\right)\big\vert_{2\Delta}$ with the same
property that the restriction of $\widetilde\rho$ to $\Delta_U\, \subset\, 2\Delta_U$ coincides with the
canonical trivialization of $\left((p^*_2 K_X)\otimes {\mathcal O}_{X\times X}(\Delta)\right)\big\vert_{\Delta}$
in \eqref{a2c}, then
\begin{equation}\label{c1}
\widetilde{\rho}- \rho \ \in \ H^0(U,\, K_X\big\vert_U).
\end{equation}
Let
$$
\widetilde{\phi}'_U \, \in\, H^0(2\Delta_U,\, ((p^*_1 V)\otimes (p^*_2 V^*))\big\vert_{2\Delta_U})
$$
be the section constructed as in \eqref{a7a} by substituting $\widetilde{\rho}$ in place of $\rho$. Then from
\eqref{a8} it follows that
$$
{\phi}'_U - \widetilde{\phi}'_U
\ =\ (\widetilde{\rho}- \rho)\otimes {\rm Id}_{V}\big\vert_U\ \in \ H^0(U,\, K_X\big\vert_U)\otimes {\rm Id}_{V}\big\vert_U
$$
(see \eqref{c1}). So the holomorphic connection on $V\big\vert_U$ given by $\widetilde{\phi}'_U$ differs from the
holomorphic connection on $V\big\vert_U$ given by ${\phi}'_U$ by a section of $(K_X\otimes{\rm Id}_V)\big\vert_U$.
Hence these two holomorphic connections on $V\big\vert_U$ produce the same holomorphic connection on ${\mathbb P}
(V)\big\vert_U$. Thus $\phi$ in \eqref{a6} produces a holomorphic connection on ${\mathbb P}(V)$ over $X$.
\end{proof}

\section{Projective connections on stable vector bundles}

\subsection{Projective unitary connections}

Let $X$ be a compact connected Riemann surface of genus $g$, with $g\, \geq\,2$.
Fix a holomorphic line bundle $\xi$ on $X$. Fix an integer $r \, \geq\, 2$. If
$g\,=\,2$, then assume that $r \, \geq\, 3$.

Let
\begin{equation}\label{b1}
{\mathcal M}\ =\ {\mathcal M}(r,\xi)
\end{equation}
be the moduli space of stable vector bundles $E$ on $X$ with ${\rm rank}(E)\,=\, r$ and
$\bigwedge^r E \,=\,\xi$. It is a smooth complex quasiprojective variety of dimension
$(r^2-1)(g-1)$. For any $E\,\in\, {\mathcal M}$, the projective bundle ${\mathbb P}(E)$ admits a 
holomorphic connection \cite{We}, \cite{At}, \cite{AzBi}.

Let ${\mathcal N}_C$ denote the moduli space of all pairs of the form $(E,\, {\mathcal D})$, where
$E\, \in\, {\mathcal M}$ (defined in \eqref{b1}) and ${\mathcal D}$ is a holomorphic connection
on the projective bundle ${\mathbb P}(E)$. Let
\begin{equation}\label{b2}
\Phi\, :\, {\mathcal N}_C \, \longrightarrow\, {\mathcal M},\ \ \ (E,\, {\mathcal D})\, \longmapsto\, E
\end{equation}
be the natural surjective projection. For any $E\, \in\, {\mathcal M}$, the space of all holomorphic
connections on ${\mathbb P}(E)$ is an affine space for $H^0(X,\, \text{ad}(E)\otimes K_X)$, where
${\rm ad}(E)\, \subset\, \text{End}(E)$ is the subbundle of co-rank one given by the sheaf of endomorphisms
of $E$ of trace zero. On the other hand, we have $T^*_E {\mathcal M} \,=\, H^0(X,\, \text{ad}(E)\otimes K_X)$.
The map $\Phi$ in \eqref{b2} gives an algebraic torsor on ${\mathcal M}$ for the cotangent bundle
$T^*{\mathcal M}$.

A theorem of Narasimhan and Seshadri says that any $E\, \in\, {\mathcal M}$ has the property that
the projective bundle ${\mathbb P}(E)$ has a unique holomorphic connection for which the image
of the monodromy homomorphism $\pi_1(X,\, x_0) \, \longrightarrow\, \text{PGL}(E_{x_0})$
lies in a maximal compact subgroup of $\text{PGL}(E_{x_0})$ \cite{NS}; here $x_0\, \in\, X$ is any
base point. Therefore, there is a unique section of the map $\Phi$ in \eqref{b2} 
$$
\beta \,:\, {\mathcal M}\, \longrightarrow\, {\mathcal N}_C,
$$
meaning
\begin{equation}\label{b3}
\Phi\circ\beta\ =\ {\mathcal M},
\end{equation}
satisfying the condition that for any $E\, \in\, {\mathcal M}$ the image $\beta(E)\,=\, (E, {\mathcal D})$
has the property that ${\mathcal D}$ is the holomorphic connection on ${\mathbb P}(E)$ for which the image
of the monodromy homomorphism $\pi_1(X,\, x_0) \, \longrightarrow\, \text{PGL}(E_{x_0})$
lies in a maximal compact subgroup of $\text{PGL}(E_{x_0})$. The map $\beta$ in \eqref{b3} is $C^\infty$ but it is not holomorphic.

\subsection{A canonical connection on stable bundles}

A theorem of Faltings says that a holomorphic vector bundle $V$ on $X$ is semistable if and
only if there is a holomorphic vector bundle $W$ on $X$ such that
$$
H^0(X,\, V\otimes W)\,=\, 0\,=\, H^1(X,\, V\otimes W)
$$
\cite[p.~514, Theorem 1.2]{Fa}. In particular, for every $E\, \in\, {\mathcal M}$ (defined
in \eqref{b1}), there is some
holomorphic vector bundle $F$ (which depends on $E$) satisfying the condition that
\begin{equation}\label{e1}
H^0(X,\, E\otimes F)\,=\, 0\,=\, H^1(X,\, E\otimes F).
\end{equation}
Fix a holomorphic vector bundle
\begin{equation}\label{e0}
F\ \longrightarrow\ X
\end{equation}
satisfying the condition that there is some $E\, \in \, {\mathcal M}$
for which \eqref{e1} holds. Note that \eqref{e1} it follows that $F$ is semistable.
Indeed, if $F$ is not semistable, and $F_1\, \subset\, F$ is the unique maximal semistable 
subbundle of $F$, then $\chi(E\otimes F_1)\, >\, 0$ for all $E\, \in \, {\mathcal M}$.
This implies that $H^0(X,\, E\otimes F_1)\, \not=\, 0$ and hence
$H^0(X,\, E\otimes F)\, \not=\, 0$, which contradicts \eqref{e1}. So $F$ must be semistable.

Using semicontinuity of dimension of cohomologies it follows that
there is a nonempty Zariski open subset ${\mathcal U}\, \subset\, {\mathcal M}$ such that
\eqref{e1} holds for all $V\, \in\, {\mathcal U}$. The complement
\begin{equation}\label{e2}
{\mathcal M}\setminus {\mathcal U} \ \subset\ \mathcal M
\end{equation}
is a divisor. The Picard group of $\mathcal M$ is isomorphic to ${\mathbb Z}$ \cite{DN},
and the line bundle ${\mathcal O}_{\mathcal M}({\mathcal M}\setminus {\mathcal U})$
is a positive multiple of the ample generator of ${\rm Pic}({\mathcal M})\,=\, {\mathbb Z}$.
Indeed, if $F$ is of the minimal rank $r'$ required for a zero Euler characteristic of $E\otimes F$, ($r'
\,=\, rk(E)/GCD(rk(E),\, deg(E))$),
then by \cite{DN}, the theta-divisor ${\mathcal M}\setminus {\mathcal U}$ is a generator of the Picard group.

\begin{proposition}\label{prop2}
For any vector bundle $E\, \longrightarrow \, X$ lying in the open subset $\mathcal U$ in
\eqref{e2}, the projective bundle ${\mathbb P}(E)$ has a canonical holomorphic connection.
\end{proposition}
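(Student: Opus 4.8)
The plan is to apply Proposition \ref{prop1} to the vector bundle $V\,=\,E\otimes F$ and then to descend the resulting connection from $\mathbb P(E\otimes F)$ to $\mathbb P(E)$. Since $E$ lies in $\mathcal U$, by definition $H^0(X,\,E\otimes F)\,=\,0\,=\,H^1(X,\,E\otimes F)$, so $V\,=\,E\otimes F$ satisfies the hypothesis \eqref{a2-a}. Proposition \ref{prop1} therefore produces a canonical holomorphic connection on $\mathbb P(E\otimes F)$, equivalently a holomorphic connection on the principal $\mathrm{PGL}$--bundle associated to $E\otimes F$. Locally on any small open $U\,\subset\,X$ this projective connection lifts to an honest holomorphic connection $\nabla^{E\otimes F}$ on $(E\otimes F)\big\vert_U$ --- for instance the connection $\phi'_U$ built in the proof of Proposition \ref{prop1} --- and this lift is well defined up to addition of a scalar form in $K_X\big\vert_U\cdot\mathrm{Id}_{E\otimes F}$.

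To descend I would use the canonical partial--trace projection over the factor $F$. Writing $\mathrm{End}(E\otimes F)\,=\,\mathrm{End}(E)\otimes\mathrm{End}(F)$, the map
\[
\pi\,:\,\mathrm{End}(E\otimes F)\,\longrightarrow\,\mathrm{End}(E),\qquad A\otimes B\,\longmapsto\,\tfrac{\mathrm{tr}(B)}{\mathrm{rk}(F)}\,A,
\]
is a canonical bundle homomorphism splitting the inclusion $\mathrm{End}(E)\hookrightarrow\mathrm{End}(E\otimes F)$, $A\mapsto A\otimes\mathrm{Id}_F$, and it carries the trace--zero subbundle $\mathrm{ad}(E\otimes F)$ onto $\mathrm{ad}(E)$. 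Over a small $U$ choose auxiliary holomorphic connections $\nabla^E_0$ on $E\big\vert_U$ and $\nabla^F_0$ on $F\big\vert_U$ (these exist since $E$ and $F$ are holomorphically trivial on a disc), form the reference connection $\nabla^E_0\otimes 1+1\otimes\nabla^F_0$ on $(E\otimes F)\big\vert_U$, and set
\[
\theta\ :=\ \nabla^{E\otimes F}-\big(\nabla^E_0\otimes 1+1\otimes\nabla^F_0\big)\ \in\ H^0\big(U,\,\mathrm{End}(E\otimes F)\otimes K_X\big).
\]
Then define the local connection $\nabla^E_U\,:=\,\nabla^E_0+\pi(\theta)$ on $E\big\vert_U$, where $\pi$ acts on the $\mathrm{End}(E\otimes F)$ factor.

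The key step is to verify that $\nabla^E_U$ is independent of all these choices modulo the scalar ambiguity that is irrelevant for projective connections. A short computation shows that replacing $\nabla^E_0$ by $\nabla^E_0+\alpha$ (with $\alpha\in\mathrm{End}(E)\otimes K_X$) leaves $\nabla^E_U$ unchanged, because $\pi(\alpha\otimes\mathrm{Id}_F)\,=\,\alpha$; that replacing $\nabla^F_0$ by $\nabla^F_0+\beta$ changes $\nabla^E_U$ only by $-\tfrac{\mathrm{tr}(\beta)}{\mathrm{rk}(F)}\mathrm{Id}_E\in K_X\big\vert_U\cdot\mathrm{Id}_E$; and that the scalar ambiguity $\gamma\,\mathrm{Id}_{E\otimes F}$ in the lift $\nabla^{E\otimes F}$ produces only the change $\gamma\,\mathrm{Id}_E$. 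In every case the ambiguity lies in $K_X\cdot\mathrm{Id}_E$, so the induced holomorphic connection on $\mathbb P(E)\big\vert_U$ is unambiguous. Since the data it is built from --- the global projective connection on $\mathbb P(E\otimes F)$ from Proposition \ref{prop1} and the canonical map $\pi$ --- are globally defined, these local projective connections agree on overlaps and glue to a canonical holomorphic connection on $\mathbb P(E)$ over all of $X$.

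The main obstacle is precisely this well--definedness: one must check that the partial trace over $F$ interacts correctly with the affine (non-linear) nature of connections, so that the dependence on the auxiliary connections $\nabla^E_0,\,\nabla^F_0$ and on the scalar ambiguity of the lift all collapse into the harmless scalar part $K_X\cdot\mathrm{Id}_E$. Once this is confirmed, the remaining points --- holomorphicity of $\nabla^E_U$ and the gluing over $X$ --- are routine.
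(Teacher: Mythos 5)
Your proof is correct, and its key idea is the same as the paper's: apply Proposition \ref{prop1} to $E\otimes F$ and then split off the $E$--part using the canonical normalized partial trace over the $F$--factor; up to the (projectively irrelevant) scalar part, your map $\pi$ is exactly the paper's projection $J\,:\, sl(rr',{\mathbb C})\,\longrightarrow\, sl(r,{\mathbb C})$ of $\text{PGL}(r,{\mathbb C})\times\text{PGL}(r',{\mathbb C})$--modules coming from the decomposition ${\rm End}({\mathbb C}^r\otimes{\mathbb C}^{r'})\,=\,{\rm End}({\mathbb C}^r)\otimes{\rm End}({\mathbb C}^{r'})$. The implementations differ only in the descent mechanism --- the paper pulls back the global connection form of ${\mathbb P}(E\otimes F)$ to the fibered product ${\mathcal E}\times_X{\mathcal F}$ of principal bundles and descends $J\circ\iota^*\vartheta$ to ${\mathcal E}$ by equivariance, with no local choices, while you lift the projective connection locally to vector--bundle connections and verify that every ambiguity lands in $K_X\cdot {\rm Id}_E$ --- and both are sound.
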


\begin{proof}
Take any $E\,\in\, \mathcal U$, and consider $E\otimes F$, where $F$ is the vector bundle in
\eqref{e0}. In view of \eqref{e1}, from Proposition \ref{prop1} it follows that
the projective bundle ${\mathbb P}(E\otimes F)$ has a canonical holomorphic connection. Let
\begin{equation}\label{e4}
\nabla
\end{equation}
denote the canonical holomorphic connection on the projective bundle ${\mathbb P}(E\otimes F)$.

Let $r'$ be the rank of $F$. Denote by ${\mathcal F}$ the principal $\text{PGL}(r',
{\mathbb C})$--bundle on $X$ defined by ${\mathbb P}(F)$. Denote by ${\mathcal E}$ the principal
$\text{PGL}(r,{\mathbb C})$--bundle on $X$ defined by ${\mathbb P}(E)$. Denote by
${\mathcal G}$ the principal $\text{PGL}(rr',{\mathbb C})$--bundle on $X$
defined by ${\mathbb P}(E\otimes F)$. The decomposition $E\otimes F$ of the vector
bundle of rank $rr'$ as a tensor product produces a reduction of structure group
\begin{equation}\label{e5}
\iota \ :\ {\mathcal E}\times_X {\mathcal F} \ \hookrightarrow\ {\mathcal G}
\end{equation}
of the $\text{PGL}(rr',{\mathbb C})$--bundle $\mathcal G$ to the subgroup
\begin{equation}\label{e5b}
\text{PGL}(r,{\mathbb C})\times \text{PGL}(r', {\mathbb C})\, \hookrightarrow\,
\text{PGL}(rr', {\mathbb C});
\end{equation}
the natural action of $\text{GL}(r,{\mathbb C})\times\text{GL}(r', {\mathbb C})$ on
${\mathbb C}^r\otimes {\mathbb C}^{r'}$ defines a homomorphism
$\text{GL}(r,{\mathbb C})\times \text{GL}(r', {\mathbb C})\, \longrightarrow\,
\text{PGL}(rr', {\mathbb C})$ which in turn produces an injective homomorphism from
$\text{PGL}(r,{\mathbb C})\times \text{PGL}(r', {\mathbb C})$ to
$\text{PGL}(rr', {\mathbb C})$. The connection $\nabla$ in \eqref{e4} produces a
$sl(rr', {\mathbb C})$--valued holomorphic $1$--form on the total space of $\mathcal G$ (recall
that $sl(rr', {\mathbb C})$ is the Lie algebra of $\text{PGL}(rr',{\mathbb C})$).
This $sl(rr', {\mathbb C})$--valued holomorphic $1$--form on $\mathcal G$ will be
denoted by $\vartheta$. So
\begin{equation}\label{e6}
\iota^*\vartheta \ \in\ H^0({\mathcal E}\times_X {\mathcal F},\,
T^*({\mathcal E}\times_X {\mathcal F})\otimes_{\mathbb C} sl(rr', {\mathbb C}))
\ =\ H^0({\mathcal E}\times_X {\mathcal F},\,
T^*({\mathcal E}\times_X {\mathcal F}))\otimes sl(rr', {\mathbb C})
\end{equation}
is a $sl(rr', {\mathbb C})$--valued holomorphic $1$--form on the total space of ${\mathcal E}\times_X
{\mathcal F}$, where $\iota$ is the map in \eqref{e5}.

The homomorphism of Lie algebras for the homomorphism of Lie groups in \eqref{e5b}
is an injective homomorphism of $\text{PGL}(r,{\mathbb C})\times\text{PGL}(r', {\mathbb C})$--modules
$$
sl(r, {\mathbb C})\oplus sl(r', {\mathbb C}) \ \longrightarrow\ sl(rr', {\mathbb C}).
$$
Pre-composing it with the natural inclusion map $sl(r, {\mathbb C})\,\hookrightarrow\,
sl(r, {\mathbb C})\oplus sl(r', {\mathbb C})$ defined by $v\,\longmapsto\, (v,\, 0)$ we get an
injective homomorphism of $\text{PGL}(r,{\mathbb C})\times\text{PGL}(r', {\mathbb C})$--modules
\begin{equation}\label{e7}
I \ :\ sl(r, {\mathbb C}) \ \longrightarrow\ sl(rr', {\mathbb C});
\end{equation}
note that $\text{PGL}(r', {\mathbb C})$ acts trivially on $sl(r, {\mathbb C})$.
We will construct a splitting of this homomorphism $I$ of
$\text{PGL}(r,{\mathbb C})\times\text{PGL}(r', {\mathbb C})$--modules.

We have
$$
sl(rr', {\mathbb C})\oplus {\mathbb C}\ =\ {\rm End}({\mathbb C}^r\otimes {\mathbb C}^{r'}) \ =\ {\rm End}({\mathbb C}^r)\otimes
{\rm End}({\mathbb C}^{r'})
$$
$$
=\ (sl(r, {\mathbb C})\oplus {\mathbb C})\otimes (sl(r', {\mathbb C})\oplus {\mathbb C})
\ =\ sl(r, {\mathbb C})\oplus A,
$$
where $A\,=\, (sl(r, {\mathbb C})\otimes sl(r', {\mathbb C})) \oplus sl(r', {\mathbb C})\oplus {\mathbb C}$.
Let
\begin{equation}\label{e8}
J \ :\ sl(rr', {\mathbb C}) \ \longrightarrow\ sl(r, {\mathbb C})
\end{equation}
be the following composition of homomorphisms:
$$
sl(rr', {\mathbb C}) \, \hookrightarrow\, sl(rr', {\mathbb C})\oplus {\mathbb C}\,=\, sl(r, {\mathbb C})\oplus A\, 
\longrightarrow\, sl(r, {\mathbb C}),
$$
where $sl(r, {\mathbb C})\oplus A\, \longrightarrow\, sl(r, {\mathbb C})$ is the natural projection.

{}From the construction of $J$ in \eqref{e8} it is evident that
\begin{itemize}
\item $J$ is a homomorphism of $\text{PGL}(r,{\mathbb C})\times\text{PGL}(r', {\mathbb C})$--modules, and

\item $J\circ I \ = \ {\rm Id}_{sl(r, {\mathbb C})}$, where $I$ is the homomorphism in \eqref{e7}.
\end{itemize}
Consider the form $\iota^*\vartheta$ constructed in \eqref{e6}. Note that
$J\circ \iota^*\vartheta$ is a $sl(r, {\mathbb C})$--valued holomorphic $1$--form on the total
space of ${\mathcal E}\times_X {\mathcal F}$. Next, consider the natural projection
\begin{equation}\label{ep}
\varpi\ :\ {\mathcal E}\times_X {\mathcal F}\ \longrightarrow\ {\mathcal E}.
\end{equation}
{}From the above two properties of $J$ and the properties of $\vartheta$ it
follows immediately that $J\circ \iota^*\vartheta$ descends to a
$sl(r, {\mathbb C})$--valued holomorphic $1$--form on the total space of ${\mathcal E}$. In other words,
there is a unique $sl(r, {\mathbb C})$--valued holomorphic $1$--form $\widetilde{\vartheta}$ on the total
space of ${\mathcal E}$ such that
\begin{equation}\label{ep2}
\varpi^*\widetilde{\vartheta} \ =\ J\circ \iota^*\vartheta,
\end{equation}
where $\varpi$ is the projection in \eqref{ep}. It is now straightforward to check that the
restriction of $\widetilde{\vartheta}$ to the fibers of $\mathcal E$ coincides with the Maurer--Cartan form,
and $\widetilde{\vartheta}$ is $\text{PGL}(r,{\mathbb C})$--equivariant. Consequently,
$\widetilde{\vartheta}$ defines a holomorphic connection on the principal $\text{PGL}(r,{\mathbb C})$--bundle
$\mathcal E$. This completes the proof.
\end{proof}

Associating to any $E\, \in\, \mathcal U$ (see \eqref{e2})
the holomorphic connection on ${\mathbb P}(E)$ constructed in Proposition
\ref{prop2}, we get a map
\begin{equation}\label{e8a}
\gamma \ :\ {\mathcal U}\ \longrightarrow\ {\mathcal N}_C
\end{equation}
(see \eqref{b2}). The map $\gamma$ in \eqref{e8a} is algebraic, and also
\begin{equation}\label{e8b}
\Phi\circ\gamma\ =\ \text{Id}_{\mathcal U},
\end{equation}
where $\Phi$ is the projection in \eqref{b2}.

It should be clarified that the map $\gamma$ depends on $F$
in \eqref{e0}. In fact, $\mathcal U$ depends on $F$.

The section $\beta$ of $\Phi$ in \eqref{b3} is defined on entire $\mathcal M$, but it is not holomorphic.
In contrast, the section $\gamma$ in \eqref{e8a} is algebraic but it is defined only on $\mathcal U$.

\section{The theta line bundle on moduli space}

\subsection{A line bundle and a section}

By convention, the top exterior product of the $0$--dimensional complex vector space is
$\mathbb C$.

Consider the vector bundle $F$ in \eqref{e0}. We have a natural algebraic line bundle
\begin{equation}\label{el}
L\ \longrightarrow\ {\mathcal M}
\end{equation}
on the moduli space (see \eqref{b1}) whose fiber over any $E\, \in\, {\mathcal M}$
is $(\bigwedge^{\rm top} H^0(X,\, E\otimes F)^*)\otimes (\bigwedge^{\rm top} H^1(X,\, E\otimes F))$.
To construct $L$ explicitly, take an open subset $U$, in \'etale topology, of ${\mathcal M}$ such that there
is a Poincar\'e vector bundle ${\mathcal E}\, \longrightarrow\, X\times U$. Let
\begin{equation}\label{p2}
\psi\, :\, X\times U \, \longrightarrow\, U \ \ \,\text{ and }\, \ \
\eta\, :\, X\times U \, \longrightarrow\, X
\end{equation}
be the natural projections. Consider the line bundle
$$
{\rm Det}'({\mathcal E})\ :=\ (\det \psi_*({\mathcal E}\otimes\eta^*F)^*)\otimes
(\det R^1\psi_*({\mathcal E}\otimes\eta^*F))\ \longrightarrow\ U
$$
(see \cite[Ch.~V, \S~6]{Ko} for the construction of the determinant line bundle). Take another
Poincar\'e line bundle ${\mathcal E}\otimes\psi^*\zeta$ on $X\times U$. Then by the projection formula,
$$
{\rm Det}'({\mathcal E}\otimes\zeta)\otimes \zeta^{\otimes \chi(E\otimes E)}\ =\ 
{\rm Det}'({\mathcal E}),
$$
where $E\, \in\, {\mathcal M}$ and $\chi(E\otimes E)\,=\, \dim H^0(X,\, E\otimes F)-
\dim H^1 (X,\, E\otimes F)$ (it is independent of the choice of $E$). But from
\eqref{e1} it follows that $\chi(E\otimes E)\,=\, 0$. Hence we have
$$
{\rm Det}'({\mathcal E}\otimes\zeta) \ =\ {\rm Det}'({\mathcal E}).
$$
Consequently, these locally defined line bundles on $\mathcal M$ of the form ${\rm Det}'
({\mathcal E})$ patch together compatibly to produce a line bundle $L$ as in \eqref{el}.

The line bundle $L$ in \eqref{el} has a natural section which will be described next.

Fix a point $x_0\, \in\, X$ and an integer $d_0 \, >\, 2(g-1) -\text{degree}(\xi)/r$ (see
\eqref{b1}). Denote the divisor $d_0x_0$ on $X$ by $D_0$. For any $E\, \in\, {\mathcal M}$, we have
$$
H^1(X,\, E\otimes {\mathcal O}_X(D_0)) \,=\, H^0(X,\, E^*\otimes {\mathcal O}_X(-D_0)\otimes K_X)^*
$$
by Serre duality. Now $E^*\otimes {\mathcal O}_X(-D_0)\otimes K_X$ is stable because $E$ is so, and
$$\text{degree}(E^*\otimes {\mathcal O}_X(-D_0)\otimes K_X)
\,=\, 2(g-1)r - d_0r - \text{degree}(\xi) \, <\, 0.$$
Hence $H^0(X,\, E^*\otimes {\mathcal O}_X(-D_0)\otimes K_X)\,=\, 0$, which implies that
\begin{equation}\label{z1}
H^1(X,\, E\otimes {\mathcal O}_X(D_0)) \,=\,0.
\end{equation}
As before, take an open subset $U$, in \'etale topology, of ${\mathcal M}$ such that there
is a Poincar\'e vector bundle ${\mathcal E}\, \longrightarrow\, X\times U$. Consider the following
short exact sequence of sheaves on $X\times U$:
\begin{equation}\label{ex1}
0 \, \longrightarrow\, {\mathcal E} \, \longrightarrow\, {\mathcal E}\otimes \eta^* {\mathcal O}_X(D_0)
\, \longrightarrow\, ({\mathcal E}\otimes \eta^* {\mathcal O}_X(D_0))\big\vert_{D_0\times U}\, \longrightarrow\, 0,
\end{equation}
where $\eta$ is the map in \eqref{p2}. From \eqref{z1} it follows that
$$
R^1\psi_* ({\mathcal E}\otimes \eta^* {\mathcal O}_X(D_0))\ =\ 0,
$$
where $\psi$ is the map in \eqref{p2}. Therefore, taking the direct image of \eqref{ex1} using $\psi$ we have
the exact sequence
\begin{equation}\label{ex2}
0 \, \longrightarrow\, \psi_* {\mathcal E} \, \longrightarrow\, \psi_*({\mathcal E}\otimes \eta^*
{\mathcal O}_X(D_0)) \, \xrightarrow{\,\,\,{\mathcal R}\,\,\,}\, \psi_*(({\mathcal E}\otimes \eta^*
{\mathcal O}_X(D_0))\big\vert_{D_0\times U})\, \longrightarrow\, R^1\psi_* {\mathcal E} \, \longrightarrow\, 0.
\end{equation}
Note that the support of $({\mathcal E}\otimes \eta^* {\mathcal O}_X (D_0))\big\vert_{D_0\times U}$
is finite over $U$ which implies that $R^1\psi_*({\mathcal E}\otimes \eta^* {\mathcal O}_X (D_0))\big\vert_{D_0\times U}$
vanishes. Hence
$\psi_*(({\mathcal E}\otimes \eta^* {\mathcal O}_X (D_0))\big\vert_{D_0\times U})$ is a vector
bundle on $U$. Also, from \eqref{z1} it follows that $\psi_*({\mathcal E}\otimes \eta^*
{\mathcal O}_X(D_0))$ is a vector bundle on $U$. Now from \eqref{ex2} it follows that
\begin{equation}\label{e12}
L\big\vert_U\ :=\ \det (\psi_* {\mathcal E})^*
\otimes \det (R^1 \psi_* {\mathcal E})
\end{equation}
$$
=\ \det (\psi_*({\mathcal E}\otimes \eta^* {\mathcal O}_X(D_0)))^*\otimes 
\det (\psi_*(({\mathcal E}\otimes \eta^* {\mathcal O}_X(D_0))\big\vert_{D_0\times U})).
$$
The homomorphism $\mathcal R$ in \eqref{ex2} produces a homomorphism
$$
\det {\mathcal R}\ :\ \det (\psi_*({\mathcal E}\otimes \eta^*
{\mathcal O}_X(D_0))) \ \longrightarrow \ \det (\psi_*(({\mathcal E}\otimes\eta^*
{\mathcal O}_X(D_0))\big\vert_{D_0\times U})).
$$
In view of \eqref{e12}, this homomorphism $\det {\mathcal R}$ produces a section
$$
\Gamma_U\ \in \ H^0(U,\, L\big\vert_U).
$$
This section $\Gamma_U$ does not depend on the choices of $x_0$ or $d_0$. It is also independent
of the choice of the Poincar\'e bundle $\mathcal E$. Consequently, these locally defined sections
of the form $\Gamma_U$ patch together compatibly to produce a section
\begin{equation}\label{e13}
\Gamma \ \ \in\ \ H^0({\mathcal M},\, L).
\end{equation}
It is straightforward to see that $\Gamma$ does not vanish on $\mathcal U$ (see \eqref{e2}).
Indeed, both the kernel and the cokernel of the homomorphism $\mathcal R$ in \eqref{ex2} vanish over $\mathcal U$.
So $\mathcal R$ is an isomorphism over $\mathcal U$. Hence the homomorphism $\det {\mathcal R}$ does
not vanish over any point of $\mathcal U$.
In fact, the support of the divisor for $\Gamma$ is exactly ${\mathcal M}\setminus {\mathcal U}$.
This is because the kernel and cokernel of the homomorphism $\mathcal R$ are both nonzero over any point
of ${\mathcal M}\setminus {\mathcal U}$. Hence the homomorphism $\det {\mathcal R}$ vanishes on every
point of the complement ${\mathcal M}\setminus {\mathcal U}$.

\subsection{Sheaf of connections}\label{ses}

The Atiyah exact sequence for the line bundle $L$ in \eqref{el} is the following:
$$
0\, \longrightarrow\, \text{Diff}^0(L,\, L)\,=\, {\mathcal O}_{\mathcal M} \, \longrightarrow\, 
\text{Diff}^1(L,\, L) \, \longrightarrow\, T{\mathcal M} \, \longrightarrow\, 0,
$$
where $\text{Diff}^i(L,\, L)$ is the sheaf on $X$ of holomorphic differential operators from $L$ to itself
of degree $i$ and the above projection $\text{Diff}^1(L,\, L) \, \longrightarrow\, T{\mathcal M}$ is the
symbol map. For any open subset $U\, \subset\, {\mathcal M}$, a holomorphic connection on $L\big\vert_U$
is a holomorphic splitting, over $U$, of the Atiyah exact sequence \cite{At}. Let
\begin{equation}\label{e9}
0\, \longrightarrow\, T^*{\mathcal M} \, \longrightarrow\, \text{Diff}^1(L,\, L)^* \,
\stackrel{\sigma}{\longrightarrow}\, {\mathcal O}_{\mathcal M} \, \longrightarrow\, 0
\end{equation}
be the dual of the Atiyah exact sequence. So a holomorphic connection on $L\big\vert_U$ is a holomorphic splitting
of \eqref{e9} over $U$. Denote the constant function $1$ on $\mathcal M$ by $1_{\mathcal M}$. Consider the inverse image
$$
\text{Conn}(L)\ :=\ \sigma^{-1}(1_{\mathcal M}({\mathcal M}))\ \subset\ \text{Diff}^1(L,\, L)^* ,
$$
where $\sigma$ is the projection in \eqref{e9}.
Let
\begin{equation}\label{e10}
\Psi\ :\ \text{Conn}(L)\ \longrightarrow\ {\mathcal M}
\end{equation}
be the natural projection. From \eqref{e9} it follows immediately that $\text{Conn}(L)$ is a torsor over $\mathcal M$
for $T^*\mathcal M$. For any open subset $U\, \subset\, {\mathcal M}$, a holomorphic connection on $L\big\vert_U$ is a holomorphic
section, over $U$, of the projection $\Psi$ in \eqref{e10}.

The section $\Gamma$ in \eqref{e13} produces a trivialization of $L\big\vert_{\mathcal U}$ (recall
that $\Gamma$ does not vanish on any point over $\mathcal U$).
So the connection on ${\mathcal O}_{\mathcal U}$ given by the de Rham differential $d$ produces
a connection on $L\big\vert_{\mathcal U}$ using the isomorphism ${\mathcal O}_{\mathcal U}\,
\longrightarrow\, L\big\vert_{\mathcal U}$ given by the trivialization of $L\big\vert_{\mathcal U}$.
This connection on $L\big\vert_{\mathcal U}$ defines a section, over $\mathcal U$,
\begin{equation}\label{e11}
\delta\ :\ {\mathcal U} \ \longrightarrow\ \text{Conn}(L)\big\vert_{\mathcal U}
\end{equation}
of the projection $\Psi$ in \eqref{e10}.

Next we will describe a $C^\infty$ section of the projection $\Psi$ in \eqref{e10}.
As mentioned before, for every stable vector bundle $E$ on $X$ the projective bundle
${\mathbb P}(E)$ has a unique holomorphic connection for which the image
of the monodromy homomorphism $\pi_1(X,\, x_0) \, \longrightarrow\, \text{PGL}(E_{x_0})$
lies in a maximal compact subgroup of $\text{PGL}(E_{x_0})$ \cite{NS}. Applying Hodge
theory of unitary local systems this unique holomorphic connection produces a K\"ahler
form on the moduli space ${\mathcal M}$ \cite{AtBo}. Let
\begin{equation}\label{e12b}
\omega_{\mathcal M}\ \in\ C^\infty(M;\, \Omega^{1,1}_M)
\end{equation}
denote this K\"ahler form on $\mathcal M$.

Using the above unique holomorphic connection, Quillen constructed a Hermitian structure on the line
bundle $L$ in \eqref{el} \cite{Qu} (see \cite{BGS1}, \cite{BGS2}, \cite{BGS3} for similar
constructions in a more general set-up). Denote this Hermitian structure on $L$ by $h_Q$. The
Hermitian connection on $L$, for the Hermitian structure $h_Q$, will be denoted by $\nabla^{L
,Q}$. So $\nabla^{L,Q}$ is a $C^\infty$ section of the projection $\Psi$ in \eqref{e10}.
Let $c_1(\nabla^{L,Q})\, \in\, C^\infty(M;\, \Omega^{1,1}_M)$ be the corresponding
Chern form. There is a positive real number $\lambda$ satisfying the condition that
\begin{equation}\label{e13b}
c_1(\nabla^{L,Q})\ =\ \lambda\cdot \omega_{\mathcal M},
\end{equation}
where $\omega_{\mathcal M}$ is the K\"ahler form in \eqref{e12b} \cite{Qu} (see \cite{BGS1},
\cite{BGS2}, \cite{BGS3} for more general results).

\begin{theorem}\label{thm1}
There is a unique real number $\lambda_0$ and a unique algebraic isomorphism of
fiber bundles over $\mathcal M$
$$
\mathcal{I}\ :\ {\rm Conn}(L) \ \longrightarrow\ {\mathcal N}_C
$$
(see \eqref{e10} and \eqref{b2}) such that for any $E\, \in\, {\mathcal M}$ and any
$z\, \in\, {\rm Conn}(L)_E\,=\, \Psi^{-1}(E)$,
$$
\mathcal{I}(z +w)\ =\ \mathcal{I}(z) + \lambda_0\cdot w
$$
for all $w\, \in\, T^*_E {\mathcal M}$.
\end{theorem}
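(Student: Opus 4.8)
The plan is to treat both $\text{Conn}(L)$ and ${\mathcal N}_C$ as algebraic torsors over $\mathcal M$ for the coherent sheaf $T^*{\mathcal M}\,=\,\Omega^1_{\mathcal M}$, and to compare their classifying classes in $H^1({\mathcal M},\, \Omega^1_{\mathcal M})$. Recall that such a torsor $P$ is classified by a class $c(P)\,\in\, H^1({\mathcal M},\, \Omega^1_{\mathcal M})$, and that for a nonzero scalar $\lambda_0$ a bundle map $\mathcal I\,:\, P_1\,\longrightarrow\, P_2$ with $\mathcal I(z+w)\,=\,\mathcal I(z)+\lambda_0 w$ exists exactly when $c(P_2)\,=\,\lambda_0\, c(P_1)$; when it exists, it is unique up to translation by a global section of $\Omega^1_{\mathcal M}$. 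Since $\mathcal M$ is unirational, $H^0({\mathcal M},\, \Omega^1_{\mathcal M})\,=\,0$, so such an $\mathcal I$ is automatically unique once produced. Thus the whole statement reduces to exhibiting a (necessarily unique, real) scalar $\lambda_0$ with
\begin{equation}\label{plan-key}
c({\mathcal N}_C)\ =\ \lambda_0\cdot c(\text{Conn}(L))
\end{equation}
in $H^1({\mathcal M},\, \Omega^1_{\mathcal M})$.

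First I would identify $c(\text{Conn}(L))$. By construction (see \eqref{e9}) $\text{Conn}(L)$ is the Atiyah torsor of $L$, so $c(\text{Conn}(L))$ is the Atiyah class of $L$; under the Dolbeault isomorphism $H^1({\mathcal M},\, \Omega^1_{\mathcal M})\,\cong\, H^{1,1}_{\bar\partial}({\mathcal M})$ it is represented by the curvature $(1,1)$--form of any Hermitian connection on $L$. Applying this to the Quillen connection $\nabla^{L,Q}$ and invoking \eqref{e13b}, the representative is a nonzero constant multiple of the K\"ahler form $\omega_{\mathcal M}$ of \eqref{e12b}. In particular $c(\text{Conn}(L))\,\neq\,0$, since $L$ is ample and its first Chern class is nonzero; this is what will force $\lambda_0$ to be unique.

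The second, and main, step is to identify $c({\mathcal N}_C)$. Here I would use the $C^\infty$ Narasimhan--Seshadri section $\beta$ of \eqref{b3}: trivializing ${\mathcal N}_C$ by local holomorphic sections and measuring the failure of $\beta$ to be holomorphic yields a $\bar\partial$--closed $(1,1)$--form representing $c({\mathcal N}_C)$. The substance of the Atiyah--Bott Hodge--theoretic construction of $\omega_{\mathcal M}$ (the construction recalled in \eqref{e12b}, \cite{AtBo}) is precisely that this form is again a nonzero constant multiple of $\omega_{\mathcal M}$. The hard part will be extracting this constant: one must match the normalization of the symplectic/K\"ahler form built from the projective unitary connections with the extension class of the affine bundle ${\mathcal N}_C$ of projective connections, and verify that the universal transcendental factors occurring in both Dolbeault representatives agree, so that $\lambda_0$ comes out real. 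Granting this, both $c({\mathcal N}_C)$ and $c(\text{Conn}(L))$ are real multiples of the single class $[\omega_{\mathcal M}]$, whence \eqref{plan-key} holds for a unique real $\lambda_0$, nonzero and well defined because $c(\text{Conn}(L))\,\neq\,0$.

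Finally, \eqref{plan-key} produces the desired algebraic isomorphism $\mathcal I\,:\,\text{Conn}(L)\,\longrightarrow\,{\mathcal N}_C$ with the asserted affine behaviour, and uniqueness follows from $H^0({\mathcal M},\, \Omega^1_{\mathcal M})\,=\,0$. As a consistency check, and an alternative route to algebraicity that sidesteps the analytic normalization, I would compare $\mathcal I$ on the open set $\mathcal U$ of \eqref{e2} with the explicit algebraic sections $\gamma$ of \eqref{e8a} and $\delta$ of \eqref{e11}: over $\mathcal U$ both torsors are trivialized, $\mathcal I$ must send $\delta(E)+w$ to $\gamma(E)+\lambda_0 w$, and the only remaining issue is the extension across the theta divisor ${\mathcal M}\setminus{\mathcal U}$, which the class identity \eqref{plan-key} guarantees. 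I expect the determination of the constant in the second step to be the principal obstacle.
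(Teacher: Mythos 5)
Your proposal takes essentially the same route as the paper's proof: compare the two torsors through their classes in $H^1({\mathcal M},\, T^*{\mathcal M})\,=\,{\mathbb C}$, represent $c({\rm Conn}(L))$ by $\overline{\partial}\nabla^{L,Q}\,=\,\lambda\cdot\omega_{\mathcal M}$ and $c({\mathcal N}_C)$ by $\overline{\partial}\beta$, and get uniqueness of $\mathcal I$ from $H^0({\mathcal M},\, T^*{\mathcal M})\,=\,0$ via unirationality. The one point to correct is attribution rather than substance: the ``hard part'' you anticipate, namely that $\overline{\partial}\beta$ is a positive real multiple of $\omega_{\mathcal M}$, is not something the paper extracts from the Atiyah--Bott construction by a normalization computation; it is quoted as a known theorem of Zograf--Takhtadzhyan and Ivanov (see \eqref{e16}), after which no determination of the actual constant is needed, since the theorem only requires both classes to be positive real multiples of the same nonzero class.
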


\begin{proof}
Let us first show that given $\lambda_0$, there are no two distinct isomorphisms
satisfying the above condition. Let
$$
\mathcal{I}\ :\ \text{Conn}(L) \ \longrightarrow\ {\mathcal N}_C \ \ \text{ and }\ \
\mathcal{J}\ :\ \text{Conn}(L) \ \longrightarrow\ {\mathcal N}_C
$$
be two algebraic isomorphism of fiber bundles over $\mathcal M$ such that
for any $E\, \in\, {\mathcal M}$ and any $z\, \in\, \Psi^{-1}(E)$,
$$
\mathcal{I}(z +w)\, =\, \mathcal{I}(z) + \lambda_0\cdot w \ \ \text{ and }\ \
\mathcal{J}(z +w)\, =\, \mathcal{J}(z) + \lambda_0\cdot w
$$
for all $w\, \in\, T^*_E {\mathcal M}$. Therefore,
\begin{equation}\label{e14}
\mathcal{I}(z +w) - \mathcal{J}(z +w)\ = \ \mathcal{I}(z) - \mathcal{J}(z).
\end{equation}
Let $\mathcal{I} - \mathcal{J}$ be the algebraic $1$-form on $\mathcal M$
that sends any $E\, \in\, {\mathcal M}$ to
$\mathcal{I}(z) - \mathcal{J}(z) \, \in\, T^*_E {\mathcal M}$, where
$z\, \in\, \Psi^{-1}(E)$; note that from \eqref{e14} it follows immediately that
$\mathcal{I}(z) - \mathcal{J}(z) \, \in\, T^*_E {\mathcal M}$ is independent of
the choice of $z$. On the other hand,
\begin{equation}\label{e17}
H^0({\mathcal M},\, T^* {\mathcal M})\ = \ 0,
\end{equation}
because $\mathcal M$ is unirational. This implies that $\mathcal{I}\,=\, \mathcal{J}$.

Now we will show that there is a unique real number $\lambda_0$ satisfying the
condition that there is an algebraic isomorphism of fiber bundles over $\mathcal M$
$$
\mathcal{I}\ :\ \text{Conn}(L) \ \longrightarrow\ {\mathcal N}_C
$$
such that for any $E\, \in\, {\mathcal M}$ and any
$z\, \in\, \text{Conn}(L)_E\,=\, \Psi^{-1}(E)$,
$$
\mathcal{I}(z +w)\ =\ \mathcal{I}(z) + \lambda_0\cdot w
$$
for all $w\, \in\, T^*_E {\mathcal M}$.

To prove this, we first note that the isomorphism classes of algebraic $T^*{\mathcal M}$--torsors on
$\mathcal M$ are parametrized by $H^1({\mathcal M},\, T^*{\mathcal M})$. Given a $T^*{\mathcal M}$--torsor
$\mathbb V$ on ${\mathcal M}$ we may trivialize it on an open open covering of $\mathcal M$. On the
intersection of two open subsets in this covering the difference of two trivializations of $\mathbb V$
produce a section of $T^*{\mathcal M}$. This way we get an element of $H^1({\mathcal M},\, T^*{\mathcal M})$
associated to $\mathbb V$. A construction of the Dolbeault cohomology class
of this element of $H^1({\mathcal M},\, T^*{\mathcal M})$ is as follows. Take a $C^\infty$
section $\sigma$ of $\mathbb V$ over ${\mathcal M}$. Now $\overline{\partial}\sigma$ is a
$\overline{\partial}$--closed $(1,\,1)$-form on ${\mathcal M}$, and hence it gives an element of
$H^1({\mathcal M},\, T^*{\mathcal M})$.

We have
\begin{equation}\label{e15}
H^1({\mathcal M},\, T^*{\mathcal M})\ =\ H^2({\mathcal M},\,{\mathbb C})\ =\ {\mathbb C}.
\end{equation}
Consider the $C^\infty$ section $\nabla^{L,Q}$ in \eqref{e13b} of the
$T^*{\mathcal M}$--torsor $\text{Conn}(L)$ in \eqref{e10}. From \eqref{e13b} it follows
immediately that the $T^*{\mathcal M}$--torsor $\text{Conn}(L)$ in \eqref{e10} is given by a
positive real number.

Next consider the $C^\infty$ section $\beta$ in \eqref{b3} of $\Phi$. The $(1,\,1)$--form
$\overline{\partial}\beta$ on $\mathcal M$ has the property that there is a positive real
number $\nu$ such that
\begin{equation}\label{e16}
\overline{\partial}\beta \ =\ \nu\cdot \omega_{\mathcal M},
\end{equation}
where $\omega_{\mathcal M}$ is the K\"ahler form in \eqref{e12b} \cite{ZT}, \cite{Iv}.
In view of \eqref{e16}, and the above observation that the $T^*{\mathcal M}$--torsor $\text{Conn}(L)$
in \eqref{e10} is given by a positive real number, the theorem follows from \eqref{e15}.
\end{proof}

\begin{proposition}\label{prop3}
The sections $\nabla^{L,Q}$ and $\beta$ of $\Psi$ and $\Phi$ respectively (see \eqref{e13b}
and \eqref{b3}) satisfy the following:
$$
\mathcal{I}\circ \nabla^{L,Q}\ =\ \beta,
$$
where $\mathcal{I}$ is the isomorphism in Theorem \ref{thm1}.
\end{proposition}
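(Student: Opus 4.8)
The plan is to show that the $C^\infty$ $1$--form
$$
\alpha\ :=\ \mathcal{I}\circ\nabla^{L,Q} - \beta
$$
on $\mathcal M$ vanishes identically. Both $\mathcal{I}\circ\nabla^{L,Q}$ and $\beta$ are $C^\infty$ sections of the $T^*\mathcal M$--torsor $\mathcal N_C$ (the former because $\mathcal I$ is algebraic and $\nabla^{L,Q}$ is $C^\infty$, the latter by \eqref{b3}), so their difference is a genuine $C^\infty$ section of $T^*\mathcal M$, that is, a smooth $(1,0)$--form. The strategy is to prove that $\overline\partial\alpha = 0$ as a form, which forces $\alpha$ to be holomorphic, and then to invoke $H^0(\mathcal M,\, T^*\mathcal M) = 0$ from \eqref{e17} to conclude $\alpha = 0$.

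First I would compute $\overline\partial(\mathcal I\circ\nabla^{L,Q})$. Over a small open set, choose a holomorphic section $s$ of $\text{Conn}(L)$ and write $\nabla^{L,Q} = s + \mu$ for a smooth $1$--form $\mu$. Since $\mathcal I$ is holomorphic with linear part $\lambda_0$ on the fibres, $\mathcal I\circ\nabla^{L,Q} = \mathcal I(s) + \lambda_0\,\mu$ with $\mathcal I(s)$ holomorphic, whence $\overline\partial(\mathcal I\circ\nabla^{L,Q}) = \lambda_0\,\overline\partial\mu = \lambda_0\,\overline\partial\nabla^{L,Q}$. Thus $\overline\partial\alpha = \lambda_0\,\overline\partial\nabla^{L,Q} - \overline\partial\beta$.

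Next I would identify both $\overline\partial$--terms with explicit multiples of the Kähler form $\omega_{\mathcal M}$. The Dolbeault representative $\overline\partial\nabla^{L,Q}$ of the torsor class of $\text{Conn}(L)$ is the Chern curvature of the Hermitian connection computed exactly as in the proof of Theorem \ref{thm1}, hence a fixed nonzero constant multiple of $c_1(\nabla^{L,Q}) = \lambda\,\omega_{\mathcal M}$ by \eqref{e13b}; and $\overline\partial\beta = \nu\,\omega_{\mathcal M}$ by \eqref{e16}. So $\overline\partial\alpha$ is a scalar multiple of the single fixed form $\omega_{\mathcal M}$. The content of Theorem \ref{thm1} is precisely that $\lambda_0$ is chosen so that $\lambda_0\cdot[\overline\partial\nabla^{L,Q}] = [\overline\partial\beta]$ in $H^1(\mathcal M,\, T^*\mathcal M) = H^2(\mathcal M;\,\mathbb C) = \mathbb C$ (see \eqref{e15}). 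Because $[\omega_{\mathcal M}]$ is a nonzero generator of this line, equality of the classes forces equality of the proportionality constants, and hence $\overline\partial\alpha = 0$ as a form, not merely cohomologically.

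The main obstacle I anticipate is exactly this last passage from cohomological to pointwise vanishing: it succeeds only because both $\overline\partial\nabla^{L,Q}$ and $\overline\partial\beta$ are honest scalar multiples of the one form $\omega_{\mathcal M}$, so that their scaled difference cannot be a nonzero $\overline\partial$--exact form, and this in turn rests on correctly matching the normalization relating $\overline\partial\nabla^{L,Q}$ to the Chern form $c_1(\nabla^{L,Q})$. Once $\overline\partial\alpha = 0$ is established, $\alpha$ is a holomorphic $1$--form, so $\alpha\in H^0(\mathcal M,\, T^*\mathcal M) = 0$ by \eqref{e17}; therefore $\alpha = 0$, giving $\mathcal I\circ\nabla^{L,Q} = \beta$.
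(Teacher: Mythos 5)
Your proof is correct and takes essentially the same route as the paper's: write the difference as a smooth $(1,0)$--form, show that its $\overline{\partial}$ vanishes identically, and conclude from $H^0({\mathcal M},\, T^*{\mathcal M})\,=\,0$. The only distinction is one of detail --- where the paper simply asserts $\overline{\partial}(\beta - \mathcal{I}\circ\nabla^{L,Q})\,=\,0$ ``from the construction of $\mathcal I$,'' you correctly unpack why: both $\lambda_0\,\overline{\partial}\nabla^{L,Q}$ and $\overline{\partial}\beta$ are genuine scalar multiples of the single form $\omega_{\mathcal M}$, so the equality of their classes in $H^1({\mathcal M},\, T^*{\mathcal M})\,=\,{\mathbb C}$ built into Theorem \ref{thm1} forces equality of the forms themselves, not merely of their cohomology classes.
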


\begin{proof}
Since $\mathcal{I}\circ \nabla^{L,Q}$ and $\beta$ are two $C^\infty$ sections of $\Phi$,
$$
\beta - \mathcal{I}\circ \nabla^{L,Q}\ \in\ C^\infty({\mathcal M},\, T^*{\mathcal M}).
$$
Now from the construction of $\mathcal I$ it follows that $\overline{\partial}(\beta - \mathcal{I}
\circ\nabla^{L,Q})\,=\, 0$. Therefore, from \eqref{e17} it follows that $\beta -
\mathcal{I}\circ \nabla^{L,Q}\,=\, 0$.
\end{proof}

The above isomorphism might seem a bit surprising; after all, we are going from a connection defined on a bundle on the
curve, to a connection on the theta-line bundle over the moduli space. It is perhaps less so when one considers that the section in
$$H^0(X\times X,\, (p^*_1 E\otimes F)\otimes p^*_2 (E^*\otimes F\otimes K_X)\otimes {\mathcal O}_{X\times
X}(\Delta))$$
whose restriction to the diagonal gives the identity section in $End(E\otimes F)$ and whose restriction to the first formal neighbourhood defines 
the projective connection, also provides over the whole of $X\times X$ a Cauchy (or Szeg\"o) kernel for $E\otimes F$ \cite{BB}. Thus we have an 
inverse for the $\overline \partial$-operator on $E\otimes F$, and so an isomorphism between the determinant of $\overline \partial$ and the 
determinant of the identity operator, and thus a section of the determinant bundle.

\section*{Acknowledgements}

The first-named author acknowledges the support of a J. C. Bose Fellowship (JBR/2023/000003).

\section*{Statements and Declarations}

There is no conflict of interests regarding this manuscript. No data were generated or used.

%%%%%%%%%%%%%%%%%%%%%%%%%%%%%%%%%%%%%%%%%%%%%%%%%%%%%%%%%%%%%%%%%%%%% 

\end{document}